\newtheorem{theorem}{Theorem}
\newtheorem{lemma}[theorem]{Lemma}
\newtheorem{proposition}[theorem]{Proposition}
\newtheorem*{problem}{Problem}
\theoremstyle{definition}
\begin{document}

\title[]{A lower bound for the\\ Balan--Jiang matrix problem}

\author[]{Afonso S.\ Bandeira}
\address{Department of Mathematics, ETH Z\"urich, Switzerland}

\author[]{Dustin G.\ Mixon}
\address{Department of Mathematics, The Ohio State University, Columbus, USA}

\author[]{Stefan Steinerberger}
\address{Department of Mathematics, University of Washington, Seattle, USA}

\begin{abstract} We prove the existence of a positive semidefinite matrix $A \in \mathbb{R}^{n \times n}$ such that any decomposition into rank-1 matrices has to have factors with a large $\ell^1-$norm, more precisely
$$ \sum_{k} x_k x_k^*=A  \quad \implies \quad \sum_k \|x_k\|^2_{1} \geq c \sqrt{n} \|A\|_{1},$$
where $c$ is independent of $n$. This provides a lower bound for the Balan--Jiang matrix problem. The construction is probabilistic.
\end{abstract}

\maketitle

\section{Introduction and Result}

Given a self-adjoint positive semidefinite matrix $A\in\mathbb{C}^{n\times n}$, consider the quantity
\[
\gamma_+(A)
:=\inf\left\{
\sum_{k=1}^p\|x_k\|_1^2:x_1,\ldots,x_p\in\mathbb{C}^n,\sum_{k=1}^p x_kx_k^*=A\right\}.
\]
At a recent AMS sectional meeting (Florida State, March 2024) and soon thereafter at an Oberwolfach meeting (April 2024), Radu Balan posed the following problem concerning the relationship between $\gamma_+$ and the entrywise $1$-norm $\|\cdot\|_1$.

\begin{problem}[Balan--Jiang Matrix Problem~\cite{BalanJ:24}]
For each $n\in\mathbb{N}$, determine 
\[
C_n
=\sup\left\{\frac{\gamma_+(A)}{\|A\|_1}:A\in\mathbb{C}^{n\times n},A\succeq 0,A\neq0\right\}.
\]
\end{problem}

This problem is motivated by an analogous problem posed by Hans Feichtinger at a 2004 Oberwolfach meeting (see \cite{heil}) in the setting of positive semidefinite trace class operators on $L^2(\mathbb{R}^d)$. One way of seeing that the constant $C_n$ exists is as follows: by taking the $\ell^2-$normalized eigenvectors of the matrix $A$, we have
$ A = \sum_{k=1}^{n} (\sqrt{\lambda_k} v_k)(\sqrt{\lambda_k} v_k)^*$
which is a decomposition with cost
$$ \sum_{k=1}^{n} \lambda_k \|v_k\|_{1}^2 \leq n  \sum_{k=1}^{n} \lambda_k \|v_k\|_{2}^2 =  n\sum_{k=1}^{n} \lambda_k = n \cdot \mbox{tr}(A) \leq n \|A\|_{1}.$$
This was noted by Balan--Jiang and shows $C_n \leq n$. Moreover, also observed by Balan-Jiang, when restricting to the eigendecomposition, these bounds can be seen to be optimal up to constants by taking a circulant matrix with small off-diagonal entries: circulants are diagonalized by the Fourier matrix for which $\|v_k\|_{1}^2 \sim n \|v_k\|_{2}^2$.
However, the eigendecomposition need not provide the optimal decomposition in this sense (and often does not). Our main contribution is a lower bound on $C_n$.
\begin{theorem}
\label{thm.main result} There exists a universal $c>0$ such that
$$ c \sqrt{n} \leq C_n \leq n.$$
\end{theorem}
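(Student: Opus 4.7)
The strategy is to let $A$ be the orthogonal projection onto a \emph{Kashin subspace} of $\mathbb{C}^n$: an $m$-dimensional subspace (with $m=\lfloor n/2\rfloor$) on which the $\ell^1$ and $\ell^2$ norms are equivalent up to the extremal factor $\sqrt{n}$. Such a subspace is the content of Kashin's classical theorem, and is in fact obtained with high probability from a Haar-uniform random subspace $K$, supplying the ``probabilistic'' construction promised in the abstract. Fix a universal constant $c_0>0$ so that $\|v\|_1 \geq c_0\sqrt{n}\,\|v\|_2$ for every $v\in K$, and set $A := P_K$, the Hermitian PSD projection onto $K$, which has $\mathrm{tr}(A)=m=\Theta(n)$.

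The central observation is that, for \emph{any} decomposition $A=\sum_k x_k x_k^*$, every $x_k$ must lie in $K$. Indeed, conjugating by $P_{K^\perp}$ gives
\[
0 \;=\; P_{K^\perp}\,A\,P_{K^\perp} \;=\; \sum_k (P_{K^\perp}x_k)(P_{K^\perp}x_k)^*,
\]
a sum of rank-one PSD matrices that is forced to vanish term by term. Applying the Kashin bound to each $x_k\in K$ and summing,
\[
\sum_k \|x_k\|_1^2 \;\geq\; c_0^2\, n \sum_k \|x_k\|_2^2 \;=\; c_0^2\, n\cdot \mathrm{tr}(A) \;=\; c_0^2\, n\,m \;\gtrsim\; n^2,
\]
so $\gamma_+(A)\gtrsim n^2$. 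Notice how this step converts an \emph{analytic} property of the subspace (the Kashin inequality) into a matching lower bound on $\gamma_+$, essentially for free.

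It remains to pair this with an upper bound on $\|A\|_1$. For the Haar random projection of rank $m=\Theta(n)$, unitary invariance yields $\mathbb{E}[A_{ii}]=m/n=\Theta(1)$ and $\mathbb{E}[A_{ij}^2]=O(1/n)$ for $i\neq j$, and a routine concentration argument gives $\|A\|_1\leq C\,n^{3/2}$ with high probability. Intersecting this event with the Kashin event produces an $A$ satisfying
\[
\frac{\gamma_+(A)}{\|A\|_1} \;\gtrsim\; \frac{n^2}{n^{3/2}} \;=\; \sqrt{n},
\]
which is the desired $C_n\geq c\sqrt{n}$. The one substantive input is Kashin's theorem itself: it is the only deep ingredient and is precisely what pins down the $\sqrt{n}$ exponent; once it is available, the forced containment $x_k\in K$ and the subsequent lower bound on $\gamma_+$ follow almost immediately, and the entrywise $\ell^1$ bound on the random projection is standard.
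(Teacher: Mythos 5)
Your proof is correct and takes a genuinely different, more direct route than the paper. The paper passes to the Balan--Jiang dual (Proposition~\ref{prop.balan reduction}), constructs a test matrix $T = -\tfrac{\sqrt{n}}{4}\,\mathrm{Id} + W$ with $W$ a Rademacher Wigner matrix, and then bounds $\pi_+(T)\gtrsim 1$ from below and $\rho_1(T)\lesssim n^{-1/2}$ from above, the latter requiring a split of $x$ into large and small coordinates together with a union-bound lemma on restricted operator norms of principal submatrices and two invocations of Bai--Yin. You stay on the primal side and exhibit the witness $A$ directly as the projection $P_K$ onto a Kashin subspace $K$ of dimension $m\approx n/2$. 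Your decisive observation is the rigidity step: conjugating $A=\sum_k x_k x_k^*$ by $P_{K^\perp}$ gives a vanishing sum of PSD rank-one terms, forcing every $x_k\in K$, and then Kashin's inequality $\|v\|_1\geq c_0\sqrt{n}\|v\|_2$ on $K$ converts $\sum_k\|x_k\|_2^2 = \operatorname{tr}(A)=m$ into $\gamma_+(A)\gtrsim n^2$ for free. (In the complex setting one should note that the Kashin inequality extends to the complexification $K+iK$ with at most a $\sqrt{2}$ loss, since $\sqrt{a_j^2+b_j^2}\geq\tfrac{1}{\sqrt 2}(|a_j|+|b_j|)$.) One simplification you missed: since $A$ is a projection, $\|A\|_F^2=\operatorname{tr}(A^2)=\operatorname{tr}(A)=m$, so Cauchy--Schwarz gives $\|A\|_1\leq n\|A\|_F=n\sqrt{m}\lesssim n^{3/2}$ deterministically, and no concentration argument for the entrywise $\ell^1$ norm is needed. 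Altogether the only nontrivial ingredient in your version is Kashin's theorem, whereas the paper needs the duality reduction plus two separate concentration estimates; your approach is shorter and cleaner, while the paper's Wigner construction is morally the same $\ell^1$-vs-$\ell^2$ phenomenon (the $-\tfrac{\sqrt{n}}{4}\,\mathrm{Id}$ term penalizing concentrated vectors) realized through the dual program.
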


\section{Proof}
\subsection{Preliminaries.}

Our proof makes use of a reduction due to Balan and Jiang~\cite{BalanJ:24}.
Given a self-adjoint matrix $T\in\mathbb{C}^{n\times n}$, consider the following quantities:
\begin{align*}
\pi_+(T)
&:=\sup\left\{\operatorname{Tr}(TA):A\in\mathbb{C}^{n\times n},A\succeq0,\|A\|_1=1\right\},\\
\rho_1(T)
&:=\sup\left\{\langle Tx,x\rangle:x\in\mathbb{C}^n,\|x\|_1\leq 1\right\}.
\end{align*}

\begin{proposition}[see~\cite{BalanJ:24}]
\label{prop.balan reduction} One has
$$\displaystyle C_n
=\sup\left\{
\frac{\pi_+(T)}{\rho_1(T)}:T\in\mathbb{C}^{n\times n},T^*=T,\rho_1(T)\neq0
\right\}.$$
\end{proposition}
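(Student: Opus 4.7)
The plan is to establish the proposition by identifying $\gamma_+$ with a convex gauge and then applying duality. By the homogenization $x_k = \sqrt{c_k}\, y_k$ with $c_k \geq 0$ and $\|y_k\|_1 = 1$, the quantity $\gamma_+(A)$ becomes the Minkowski functional of the compact convex set
$$K := \operatorname{conv}\bigl\{yy^* : y \in \mathbb{C}^n,\ \|y\|_1 \leq 1\bigr\},$$
so that $\gamma_+(A) = \inf\{t > 0 : A \in tK\}$ for every $A \succeq 0$. Compactness follows since $K$ is the convex hull of a compact subset of the space of Hermitian matrices (equipped with the Frobenius inner product).

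The key step is to prove the duality identity
$$\gamma_+(A) \;=\; \sup\left\{\frac{\operatorname{Tr}(TA)}{\rho_1(T)} : T = T^*,\ \rho_1(T) > 0\right\}.$$
One direction is immediate: for any decomposition $A = \sum_k x_k x_k^*$ and any Hermitian $T$,
$$\operatorname{Tr}(TA) \;=\; \sum_k \langle T x_k, x_k\rangle \;\leq\; \rho_1(T)\sum_k \|x_k\|_1^2,$$
so taking the infimum over decompositions yields $\operatorname{Tr}(TA) \leq \rho_1(T)\, \gamma_+(A)$. For the reverse inequality, a Hahn--Banach separation argument applies: if $0 < c < \gamma_+(A)$, then $A \notin cK$, and since $cK$ is closed and convex, there is a Hermitian $T$ with $\operatorname{Tr}(TA) > c\sup_{B \in K}\operatorname{Tr}(TB) = c\rho_1(T)$. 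The case $\rho_1(T) = 0$ must be ruled out, since it would force $\langle Ty,y\rangle \leq 0$ for all $y$, hence $T \preceq 0$ and $\operatorname{Tr}(TA) \leq 0$, contradicting $\operatorname{Tr}(TA) > 0$.

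Granted the duality identity, the proposition follows by interchanging suprema:
$$C_n \;=\; \sup_{A \succeq 0,\, A \neq 0} \frac{\gamma_+(A)}{\|A\|_1} \;=\; \sup_T \sup_A \frac{\operatorname{Tr}(TA)}{\|A\|_1\,\rho_1(T)} \;=\; \sup_T \frac{\pi_+(T)}{\rho_1(T)},$$
where the last equality uses homogeneity to reduce the inner supremum to $A \succeq 0$ with $\|A\|_1 = 1$, matching the definition of $\pi_+(T)$. The principal obstacle is the Hahn--Banach step, and specifically ensuring that the separating Hermitian functional $T$ can be chosen with $\rho_1(T) > 0$; the rest of the derivation is essentially bookkeeping.
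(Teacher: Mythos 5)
The paper does not actually prove this proposition; it is cited directly from Balan--Jiang~\cite{BalanJ:24}, and the paragraph following it only sketches the easy inequality $\pi_+(T)\geq\rho_1(T)$ to conclude $C_n\geq 1$. Your argument is therefore a self-contained reconstruction, and it is correct. The key structural observations are sound: $\gamma_+$ is the Minkowski gauge of $K=\operatorname{conv}\{yy^*:\|y\|_1\leq 1\}$, which is compact (in finite dimensions the convex hull of a compact set is compact), so $cK$ is closed and Hahn--Banach separation applies; a real linear functional on Hermitian matrices under the Frobenius pairing is $B\mapsto\operatorname{Tr}(TB)$ for a Hermitian $T$; and since a linear functional over a convex hull attains its supremum on the generators, $\sup_{B\in K}\operatorname{Tr}(TB)=\rho_1(T)$. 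The one genuinely delicate step, as you flag, is that the separating $T$ must satisfy $\rho_1(T)>0$; your argument that $\rho_1(T)=0$ forces $T\preceq0$ and hence $\operatorname{Tr}(TA)\leq0$ for $A\succeq0$, contradicting the strict separation, handles this cleanly. The interchange of suprema and the reduction to $\|A\|_1=1$ by homogeneity then give the stated identity. Two minor points worth making explicit in a final write-up: (i) $\gamma_+(A)>0$ for $A\succeq0$, $A\neq0$, since $\operatorname{tr}(A)=\sum_k\|x_k\|_2^2\leq\sum_k\|x_k\|_1^2$, which guarantees the range $0<c<\gamma_+(A)$ is nonempty; and (ii) $\rho_1(T)\geq0$ always (take $x=0$), so $\rho_1(T)\neq0$ in the statement is equivalent to $\rho_1(T)>0$ as used throughout.
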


If $T$ is negative definite, then $\rho_1(T)=0$.
Otherwise, any top eigenvector of $T$ with unit $1$-norm witnesses that $\rho_1(T)>0$, in which case any maximizer $x$ of $\langle Tx,x\rangle$ subject to $\|x\|_1\leq 1$ has unit $1$-norm.
Taking $A:=xx^*$ then gives a feasible point with the same value in the program that defines $\pi_+(T)$.
This implies $\pi_+(T)\geq\rho_1(T)$, and so $C_n\geq1$. Note that if $T$ vanishes on the diagonal, then the ratio is always $\leq 2$. One way of obtaining an improved lower bound on $C_n$ is to construct a matrix $T$ in such a way that 
\begin{itemize}
\item[(i)]
$T$ has nontrivial correlation with some positive semidefinite matrix so that
$$\pi_+(T)\geq\delta \qquad \mbox{for some uniform}~\delta > 0~\mbox{independent of}~n~\mbox{and}$$
\item[(ii)] $T$ has small correlation with every rank-$1$ matrix and $\rho_1(T)\to0$ as $n\to\infty$.
\end{itemize}

\subsection{The idea.}
The main idea is to consider a random matrix of the form
$$ T = - \frac{\sqrt{n}}{4} \cdot \mbox{Id}_{n \times n} + W,$$
where $W$ is a random symmetric matrix with Rademacher $\pm 1$ entries above the diagonal and 0 on the diagonal. The idea is as follows: if $x$ is spread out over many entries, then $\left\langle Wx, x \right\rangle$ is unlikely to be large: the randomness leads to too much cancellation. However, it is a priori conceivable that $x \in \mathbb{R}^n$ is carefully selected to exploit coherent substructures in $W$. In that case, $x$ is bound to be concentrated to have most of its support on a relatively small number of coordinates which then leads to large (negative) interactions with the identity matrix. The remainder of the argument makes this intuition precise.

\subsection{The first step.} We quickly prove that matrices $T$ of this type have $\pi_{+}(T)$ bounded away from 0 with high probability.
\begin{lemma} With high probability and $n$ sufficiently large,
$$ \pi_{+}(T) \geq \frac{1}{3}.$$
\end{lemma}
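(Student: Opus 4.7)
The plan is to lower bound $\pi_+(T)$ by evaluating the objective at a single well-chosen feasible matrix, rather than optimizing. A natural candidate is the positively shifted Wigner matrix $A_0 := W + c\,I$ with $c := \tfrac{5}{2}\sqrt{n}$: this is PSD with high probability (see below), so $A := A_0/\|A_0\|_1$ satisfies $A \succeq 0$ and $\|A\|_1 = 1$, and hence $\pi_+(T) \geq \operatorname{Tr}(TA_0)/\|A_0\|_1$.

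\textbf{Computing the ratio.} Both the trace and the entrywise $1$-norm of $A_0$ are elementary. Using $\operatorname{Tr}(W) = 0$ and the deterministic identity $\operatorname{Tr}(W^2) = \sum_{i\neq j}W_{ij}^2 = n(n-1)$ (valid since $W_{ij}\in\{\pm 1\}$ off the diagonal),
\[
\operatorname{Tr}(TA_0) \;=\; \operatorname{Tr}(W^2) - \tfrac{\sqrt{n}}{4}\operatorname{Tr}(cI) \;=\; n(n-1) - \tfrac{c n \sqrt{n}}{4}.
\]
Since the entries of $A_0$ are $\pm 1$ off the diagonal and $c$ on it, $\|A_0\|_1 = n(n-1) + cn$. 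Plugging in $c = \tfrac{5}{2}\sqrt{n}$,
\[
\frac{\operatorname{Tr}(TA_0)}{\|A_0\|_1} \;=\; \frac{n(n-1) - \tfrac{5}{8}n^2}{n(n-1) + \tfrac{5}{2} n^{3/2}} \;\longrightarrow\; \frac{3}{8} \;>\; \frac{1}{3},
\]
so the ratio exceeds $1/3$ for all sufficiently large $n$.

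\textbf{Positivity of $A_0$ and main obstacle.} The only step that actually invokes randomness is the verification that $A_0 \succeq 0$, i.e.\ that $\lambda_{\min}(W) \geq -\tfrac{5}{2}\sqrt{n}$. This follows from the classical high-probability operator-norm bound $\|W\|_{\mathrm{op}} \leq (2+o(1))\sqrt{n}$ for Wigner matrices with Rademacher entries, and it leaves a full half $\sqrt{n}$ of slack. This is the only (routine) obstacle; the rest is a deterministic one-line computation. Note that the construction has some built-in slack: any $c \in ((2+\varepsilon)\sqrt{n}, (4-\varepsilon)\sqrt{n})$ would give a positive asymptotic ratio, and the specific choice $c = \tfrac{5}{2}\sqrt n$ is made only so that $3/8$ comfortably beats $1/3$.
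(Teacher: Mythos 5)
Your proof is correct and follows essentially the same approach as the paper: both take $A$ proportional to a positively shifted $W$ (the paper writes $A = aI + bW$ normalized so $\|A\|_1 = 1$, which is the same as your $(W + cI)/\|W+cI\|_1$ with the shift ratio $c = a/b$ of order $\sqrt{n}$), compute $\operatorname{Tr}(TA)$ using $\operatorname{Tr}(W)=0$ and $\operatorname{Tr}(W^2)=n(n-1)$, and invoke the Bai--Yin spectral-edge bound $\|W\|_{\mathrm{op}}=(2+o(1))\sqrt{n}$ to certify positive semidefiniteness. Your specific choice $c = \tfrac{5}{2}\sqrt{n}$ matches the paper's parameter $c=5/2$ in their $a=cn^{-3/2}$ parametrization, and in fact is slightly more careful than the paper's stated range $2<c<4$ (which only guarantees the ratio is positive, not $\geq 1/3$).
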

\begin{proof}
It suffices to find an explicit positive definite matrix $A \in \mathbb{R}^{n \times n}$ normalized to $\|A\|_1 = 1$. We consider matrices $A$ of the form
$$ A = a \cdot \mbox{Id}_{n \times n} + b \cdot W,$$
where $W$ is the same matrix as in the definition of $T$ and $a,b > 0$ as well as
$$ a n + b n(n-1) = 1$$
to ensure $\|A\|_1 = 1$.
 We have, using $\mbox{tr}(W)$ = 0 and $\mbox{tr}(W^2) = \|W\|_F^2$,
\begin{align*}
\mbox{tr}(TA) &= - a \frac{n^{3/2}}{4} + b \cdot \mbox{tr}(W^2) = - a \frac{n^{3/2}}{4} + b n(n-1) \\
&= 1 - a n   - a \frac{n^{3/2}}{4}.
\end{align*}
It remains to check whether we can choose $a>0$ to be sufficiently small for this quantity to remain bounded away from 0 while simultaneously ensuring that $A$ is positive definite. Here, we employ the Bai--Yin theorem \cite{BaiY:88} ensuring that
$$\lambda_{\max}(W)=(2+o(1))\sqrt{n}$$
happens with probability $1-o(1)$. This ensures that
\begin{align*}
 \lambda_{\min}(A) &\geq a -  (2+o(1))b\sqrt{n} = \alpha -  (2+o(1)) \frac{(1-a n)\sqrt{n}}{n (n-1)}.
 \end{align*}
Choosing $a = c n^{-3/2}$ for some $c>0$ then implies
\begin{align*}
\mbox{tr}(TA) &= 1 - \frac{c}{4} + o(1) \\
 \lambda_{\min}(A) &\geq (1+o(1))\frac{c-2}{n^{3/2}}.
\end{align*}
Any constant $2 < c < 4$ is an admissible choice for $n$ sufficiently large.
\end{proof}

\subsection{A Lemma for the second step.} The second step requires us to show that whenever $\|x\|_1 = 1$, then the quantity
$$\left\langle Tx, x \right\rangle  =   - \frac{\sqrt{n}}{4} \|x\|_2^2 +  \left\langle x, Wx \right\rangle$$
is small. Here, we will perform a decomposition of $x$ into coordinates with large entries and coordinates
with small entries: there can only be very few coordinates with large entries. The purpose of this
section is to show that when we restrict $W$ to few entries, its operator norm cannot be large.

\begin{lemma}
\label{lem.restricted spectral condition}
Let $W_S$ denote the principal submatrix of $W$ with index set $S$.
For every $\beta>0$ (independent of $n$), there exists $\alpha>0$ (independent of $n$) with
\[
\max_{\substack{S\subseteq[n]\\|S|\leq \alpha n}}\|W_S\|_{2\to2}
\leq \beta\sqrt{n}
\]
with probability at least $1-4e^{-\alpha \log(1/\alpha) n}$.
\end{lemma}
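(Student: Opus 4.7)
The plan is to carry out a standard net-and-union-bound argument, exploiting two facts: (i) for a single fixed subset $S$, the operator norm of $W_S$ can be controlled by evaluating a quadratic form on an $\varepsilon$-net of the unit sphere in $\mathbb{R}^{|S|}$, and (ii) each such quadratic form is a Rademacher sum and admits a clean Hoeffding tail bound. The remaining work is to check that the combinatorial cost of ranging over subsets $S$ of size up to $\alpha n$, together with the nets, does not eat the Gaussian tail.

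Concretely, fix $\beta>0$ and $s$, and let $S\subseteq[n]$ with $|S|=s$. Choose an $\varepsilon$-net $\mathcal N_\varepsilon$ of the unit sphere in $\mathbb{R}^s$ with $\varepsilon=1/4$, so $|\mathcal N_\varepsilon|\leq 12^{s}$ and $\|W_S\|_{2\to 2}\leq 2\max_{v\in\mathcal N_\varepsilon}|\langle v,W_Sv\rangle|$. For fixed $v\in\mathcal N_\varepsilon$, write
\[
\langle v,W_Sv\rangle=\sum_{i<j,\,i,j\in S} 2v_iv_j\,W_{ij},
\]
which is a sum of independent $\pm2v_iv_j$-bounded Rademacher terms with $\sum_{i<j}(2v_iv_j)^2\leq 2(\sum_i v_i^2)^2=2$. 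Hoeffding's inequality therefore gives
\[
\mathbb{P}\bigl(|\langle v,W_Sv\rangle|>\tfrac{\beta\sqrt n}{2}\bigr)\leq 2\exp\!\bigl(-\tfrac{\beta^2 n}{16}\bigr).
\]

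Next I would union bound over $v\in\mathcal N_\varepsilon$ and over $S$, using $\binom{n}{s}\leq(en/s)^s$, to get
\[
\mathbb{P}\bigl(\max_{|S|=s}\|W_S\|_{2\to 2}>\beta\sqrt n\bigr)
\leq 2\Bigl(\tfrac{12en}{s}\Bigr)^{s}\exp\!\bigl(-\tfrac{\beta^2 n}{16}\bigr).
\]
Finally, summing over $s=1,\dots,\alpha n$ (or simply inserting an extra factor of $n$ since the $s$-th term is log-concave and maximized at $s=\alpha n$ once $\alpha$ is small) yields a total failure probability bounded by
\[
2n\,\exp\!\Bigl(\alpha n\log\tfrac{12e}{\alpha}-\tfrac{\beta^2 n}{16}\Bigr).
\]
Choosing $\alpha=\alpha(\beta)$ small enough that $\alpha\log(12e/\alpha)+\alpha\log(1/\alpha)<\beta^2/16$ gives the desired bound $\leq 4e^{-\alpha\log(1/\alpha)\,n}$ for $n$ sufficiently large.

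The main obstacle is the tight bookkeeping of the three exponential scales: the Hoeffding gain $e^{-c\beta^2 n}$ must beat the combinatorial cost $\binom{n}{\alpha n}\sim e^{\alpha\log(1/\alpha)\,n}$ as well as the net cost $12^{\alpha n}$, \emph{and still leave} a term of the form $e^{-\alpha\log(1/\alpha)n}$ so that the failure probability is exponentially small in the range of $s$ being considered. This forces $\alpha$ to be chosen quantitatively small in $\beta$, but poses no essential difficulty since both sides scale as $\alpha\log(1/\alpha)$ and $\beta^2$ independently. No deeper random-matrix machinery (Bai--Yin, Hanson--Wright) is needed.
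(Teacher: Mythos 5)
Your proof is correct, and it is carefully executed: the Hoeffding computation $\sum_{i<j}(2v_iv_j)^2\leq 2$ and the factor-of-two comparison between $\|W_S\|$ and the net maximum (valid for a $1/4$-net of the unit sphere in the symmetric case) are both right, and the final bookkeeping of $e^{\alpha n\log(12e/\alpha)}$ against $e^{-\beta^2 n/16}$ correctly identifies the condition on $\alpha$. The overall shape of the argument is the same as the paper's: control $\|W_S\|$ for a single $S$ with high probability, then union-bound over the $\binom{n}{|S|}$ choices of $S$ and shrink $\alpha$ to absorb the combinatorial cost. The difference is in how the single-$S$ bound is obtained. The paper invokes Vershynin's Theorem 4.4.5 as a black box, which gives $\|W_S\|\leq C(\sqrt{k}+t)$ with probability $1-4e^{-t^2}$, whereas you unroll exactly what that theorem hides: an explicit $\varepsilon$-net plus a scalar Hoeffding bound for the Rademacher quadratic form. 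Your route is more elementary and fully self-contained (no appeal to subgaussian random-matrix theory), and it makes the constants explicit; the paper's route is shorter on the page but relies on a citation. Two small remarks: (i) since $\|W_S\|\leq\|W_T\|$ for $S\subseteq T$, you could skip the sum over $s$ entirely and just take $|S|=\lfloor\alpha n\rfloor$, as the paper does, though your extra factor of $n$ is harmless; and (ii) both your argument and the paper's really give the stated probability bound only for $n$ large enough (you say so explicitly; the paper elides it), which is consistent with how the lemma is used downstream.
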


\begin{proof}
Given $\beta>0$, consider $\alpha>0$ to be specified later, and put $k:=\lfloor \alpha n\rfloor$.
Since $\|W_S\|_{2\to2}\leq \|W_T\|_{2\to2}$ whenever $S\subseteq T$, it suffices to show
\[
\max_{\substack{S\subseteq[n]\\|S|=k}}\|W_S\|_{2\to2}
\leq \beta\sqrt{n}.
\]
As a consequence of \cite[Theorem~4.4.5]{Vershynin:18} (see also \cite[Corollary~4.4.8]{Vershynin:18}), there exists a universal constant $C>0$ such that for any fixed $S\subseteq[n]$ with $|S|=k$, one has
$$\|W_S\|_{2\to2}\leq C(\sqrt{k}+t)$$
 with probability at least $1-4e^{-t^2}$.
A union bound then gives
\begin{align*}
\mathbb{P}\bigg\{
\max_{\substack{S\subseteq[n]\\|S|=k}}\|W_S\|_{2\to2}>C(\sqrt{k}+t)\bigg\}
&\leq \binom{n}{k}\cdot 4e^{-t^2}\\
&\leq 4\operatorname{exp}\Big(-t^2+k\log(n/k)\Big).
\end{align*}
Selecting $t=\sqrt{2\alpha n\log(1/\alpha)}$ then gives
\[
\mathbb{P}\bigg\{
\max_{\substack{S\subseteq[n]\\|S|=k}}\|W_S\|_{2\to2}>C(\sqrt{\alpha n}+\sqrt{2\alpha n\log(1/\alpha)})\bigg\}
\leq 4e^{-\alpha \log(1/\alpha) n}.
\]
As such, it suffices to select any $\alpha>0$ that satisfies 
$$C\sqrt{\alpha} (1+\sqrt{2\log(1/\alpha)})\leq\beta.$$
\end{proof}

\subsection{Proof of the Theorem: the second step}
We will use Lemma 2 only for the special case $\beta = 1/8$. In particular, we note the existence of a universal $\kappa > 0$ so that, with high probability,
$$\max_{\substack{S\subseteq[n]\\|S|\leq \kappa n}}\|W_S\|_{2\to2}
\leq \frac{1}{8} \sqrt{n}.
$$

We can now perform the second step of the construction.

\begin{lemma} With high probability,
$$ \rho_1(T) \leq\frac{34+o(1)}{\kappa^2} \frac{1}{\sqrt{n}}. $$
\end{lemma}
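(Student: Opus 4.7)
The plan is to bound $\langle Tx, x\rangle = -\frac{\sqrt n}{4}\|x\|_2^2 + \langle x, Wx\rangle$ uniformly in $x$ with $\|x\|_1 \le 1$, conditionally on two high-probability events for $W$: Bai--Yin gives $\|W\|_{2\to 2} \le (2+o(1))\sqrt n$, and the restricted spectral bound from Lemma~\ref{lem.restricted spectral condition} gives $\|W_S\|_{2\to 2} \le \sqrt n/8$ for all $|S| \le \kappa n$. Fix a threshold $\tau > 0$ and decompose $x = y + z$ along $S := \{i : |x_i| > \tau\}$, so that $y = x \cdot \mathbf{1}_S$ and $z = x - y$. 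Markov gives $|S| \le \|x\|_1/\tau \le 1/\tau$, so choosing $\tau$ of order $1/(\kappa n)$ makes $|S| \le \kappa n$ and activates the submatrix bound on $y$. Simultaneously $\|z\|_\infty \le \tau$ implies $\|z\|_2^2 \le \|z\|_\infty\|z\|_1 \le \tau$.

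I would then expand $\langle x, Wx\rangle = \langle y, Wy\rangle + 2\langle y, Wz\rangle + \langle z, Wz\rangle$ and control each piece separately. The submatrix bound yields $\langle y, Wy\rangle \le \frac{\sqrt n}{8}\|y\|_2^2$; combining with $-\frac{\sqrt n}{4}\|x\|_2^2 \le -\frac{\sqrt n}{4}\|y\|_2^2$ leaves a residual negative curvature $-\frac{\sqrt n}{8}\|y\|_2^2$ from the identity shift. Bai--Yin immediately gives $|\langle z, Wz\rangle| \le (2+o(1))\sqrt n \|z\|_2^2 \le (2+o(1))\sqrt n\, \tau$. For the cross term, Cauchy--Schwarz gives $2|\langle y, Wz\rangle| \le 2\|y\|_2\|Wz\|_2 \le (4+o(1))\sqrt n\,\|y\|_2\|z\|_2$, and I would apply AM--GM in the form $2ab \le \frac{\sqrt n}{8}a^2 + \frac{8}{\sqrt n}b^2$ with $a = \|y\|_2$ and $b = \|Wz\|_2$, so that the $\|y\|_2^2$ contribution is exactly absorbed by the residual from the first step. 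What remains is $\frac{8}{\sqrt n}\|Wz\|_2^2 \le O(\sqrt n \|z\|_2^2)$. Summing, every surviving term is of order $\sqrt n\,\tau$, which in terms of $\kappa$ yields a bound of the advertised form on $\rho_1(T)$ after tracking Bai--Yin's $(2+o(1))$ coefficients through the AM--GM split.

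The main obstacle is the cross term $\langle y, Wz\rangle$: naive H\"older-type bounds such as $\|y\|_1\|Wz\|_\infty$ or the bare spectral bound $\|W\|_{2\to 2}\|y\|_2\|z\|_2$ both come out of constant order rather than $1/\sqrt n$, because $\|y\|_2$ is not controlled by $\|x\|_1$ alone. The key trick is to use AM--GM to trade the $\|y\|_2^2$ factor against the negative quadratic form $-\frac{\sqrt n}{8}\|y\|_2^2$ generated by the identity shift. This maneuver is only possible because $\beta = 1/8$ was chosen in Lemma~\ref{lem.restricted spectral condition} strictly smaller than the coefficient $1/4$ of the identity shift, leaving a definite gap that can absorb the AM--GM penalty; any larger choice of $\beta$ would break this cancellation and leave an uncontrollable $\|y\|_2^2$ term.
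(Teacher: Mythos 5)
Your proof is correct and follows essentially the same route as the paper: the same large/small-entry threshold $\tau\sim 1/(\kappa n)$, the same use of the restricted spectral bound on $W_S$ together with Bai--Yin for the cross and tail terms, and the same three-term expansion, with your AM--GM step (weight $\sqrt{n}/8$) being algebraically identical to the paper's explicit maximization of the quadratic in $\|x_S\|_2$. Your slightly tighter H\"older bound $\|z\|_2^2\leq\|z\|_\infty\|z\|_1\leq\tau$ in fact gives $\rho_1(T)\leq\frac{34+o(1)}{\kappa\sqrt{n}}$, which, since $\kappa\leq 1$, implies the stated estimate.
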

\begin{proof}
Let $x\in\mathbb{C}^n$ be the vector $\|x\|_1=1$ which maximizes $\left\langle Tx, x \right\rangle$ and let 
$$ S = \left\{1 \leq i \leq n: |x_i|\geq \frac{1}{\kappa n} \right\}$$
be the set of `large' entries and consider the decomposition $x=x_S+x_{S^c}$. We have 
\begin{align*}
\left\langle Tx, x \right\rangle &=  \left\langle T(x_S + x_{S^c}), (x_S + x_{S^c}) \right\rangle \\
&=  \left\langle Tx_S, x_S  \right\rangle + 2 \mbox{Re} \left\langle W x_S, x_{S^c} \right\rangle +   \left\langle Tx_{S^c}, x_{S^c} \right\rangle,
\end{align*}
where we used the fact that $x_S$ and $x_{S^c}$ are supported on disjoint coordinates to remove the contribution of the identity matrix to the off-diagonal. It remains to bound these three terms.\\

\textit{First term.} Note that, by $\ell^1-$normalization of $x$, we have $|S| \leq \kappa n$ and Lemma 2 applies. We deduce
\begin{align*}
\langle Tx_S,x_S\rangle &=-\frac{\sqrt{n}}{4}\|x_S\|_2^2+\langle Wx_S,x_S\rangle \\
&\leq-\frac{\sqrt{n}}{4}\|x_S\|_2^2+\|W_S\|_{2\to2}\|x_S\|_2^2 \leq-\frac{\sqrt{n}}{8}\|x_S\|_2^2.
\end{align*}

\textit{Second term.}  For the second term, we use again the Bai--Yin theorem to bound
\begin{align*}
2\operatorname{Re}\langle Wx_S,x_{S^c}\rangle \leq (4+o(1))\sqrt{n} \cdot \|x_S\|_2 \cdot \|x_{S^c}\|_2
\end{align*}
with high likelihood. Since all entries of $x_{S^c}$ are small, we have
$$  \|x_{S^c}\|_2 \leq \sqrt{ \frac{1}{\kappa^2 n^2} \cdot n } = \frac{1}{\kappa} \frac{1}{\sqrt{n}}$$
and we may bound the middle term by
$$ 2\operatorname{Re}\langle Wx_S,x_{S^c}\rangle \leq  \frac{(4+o(1))}{\kappa} \|x_S\|_2.$$

\textit{Third term.} The third term can be bounded by ignoring the negative contribution and using again the Bai--Yin result:
 \begin{align*}
 \left\langle Tx_{S^c}, x_{S^c} \right\rangle  &= - \frac{\sqrt{n}}{4} \|x_{S^c}\|_2^2 +  \left\langle W x_{S^c}, x_{S^c} \right\rangle \leq  \left\langle W x_{S^c}, x_{S^c} \right\rangle \\
 &\leq (2+o(1)) \sqrt{n} \|x_{S^c}\|_2^2 \leq \frac{2 + o(1)}{\kappa^2 \sqrt{n}}.
\end{align*}

\textit{Conclusion.} Finally, collecting all the terms, we have, with high probability,
\begin{align*}
\langle Tx,x\rangle
&\leq-\frac{\sqrt{n}}{8}\|x_S\|_2^2+\frac{4+o(1)}{\kappa}\|x_S\|_2+\frac{2+o(1)}{\kappa^2\sqrt{n}}.
\end{align*}
The polynomial 
$$ p(x) = -\frac{\sqrt{n}}{8}x^2+\frac{4+o(1)}{\kappa}x+\frac{2+o(1)}{\kappa^2\sqrt{n}}$$
assumes its unique global maximum in
$$ x = 4 \cdot \frac{4 + o(1)}{\kappa \sqrt{n}}$$
with maximal value
$$ p(x) \leq \frac{34 + o(1)}{\kappa^2} \frac{1}{\sqrt{n}}.$$
\end{proof}

Combining all the results, we deduce that, with high likelihood,
\[
C_n
\geq\frac{\pi_+(T)}{\rho_1(T)}
\geq\bigg(\frac{\kappa^2}{102}-o(1)\bigg) \sqrt{n}.
\]

\section{Remark}
Assume, for convenience, that $A \in \mathbb{R}^{n \times n}$ has real-valued entries. The eigendecomposition of $A$ shows that
$$ \gamma_{+}(A) \leq \sum_{k=1}^{n} \lambda_k \|v_k\|_{1}^2 \leq  \sum_{k=1}^{n} \lambda_k \|v_k\|_{2}^2 = n\sum_{k=1}^{n} \lambda_k = n \cdot \mbox{tr}(A).$$
We see that the second inequality is approximately sharp whenever $v_k$ has its $\ell^2-$mass evenly distributed across all entries. The purpose of this short section is to show that one can obtain the same type of bound by performing an iterative Matrix decomposition in the same style as in \cite{tropp}. More precisely, given an symmetric positive-definite matrix $A \in \mathbb{R}^{n \times n}$ with rows $a_1, \dots, a_n \in \mathbb{R}^n$, we may consider the new matrix
$$ A_2 = A - \frac{a_i a_i^{T}}{A_{ii}}.$$
The matrix $A_2$ remains symmetric and positive semi-definite. Moreover, we have
\begin{align*}
 \mbox{tr}(A_2) &=  \mbox{tr}(A) -  \mbox{tr}\left( \frac{a_i a_i^{T}}{A_{ii}}\right) \\
 &= \mbox{tr}(A) - \frac{1}{A_{ii}} \sum_{k=1}^{n} A_{ik}A_{ki} = \mbox{tr}(A) - \frac{\|a_i\|_2^2}{A_{ii}}.
\end{align*}
However, we also have
$$ A = A_2 + x x^* \quad \mbox{where} \quad x = \frac{1}{\sqrt{A_{ii}}} a_i \quad \mbox{and} \quad \|x\|_1^2 = \frac{\|a_i\|_1^2}{A_{ii}}.$$
We note that
$$ \|x\|_1^2 = \frac{\|a_i\|_1^2}{A_{ii}} = \frac{\|a_i\|_1^2}{\|a_i\|_2^2}  \frac{\|a_i\|_2^2}{A_{ii}} =  \frac{\|a_i\|_1^2}{\|a_i\|_2^2} \cdot \left( \mbox{tr}(A) - \mbox{tr}(A_2) \right)$$
and, in particular, by Cauchy--Schwarz,
$$ \|x\|_1^2 \leq  n \left( \mbox{tr}(A) - \mbox{tr}(A_2) \right).$$
Repeating this argument many times, we arrive at the following basic fact.
\begin{proposition}
Given a symmetric, positive-definite matrix $A \in \mathbb{R}^{n \times n}$, this decomposition, for any choice of indices $i$, results in a factorization
$$ \sum_{k=1}^n x_k x_k^*=A  \quad \mbox{with} \quad \sum_{k=1}^{n} \|x_k\|^2_{1} \leq n\cdot \emph{tr}(A).$$
\end{proposition}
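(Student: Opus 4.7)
The plan is to iterate the one-step identity derived just before the proposition and then telescope. Let $A_1:=A$, and at step $j$ pick any index $i_j$ with $(A_j)_{i_j i_j}>0$. Set
\[
x_j:=\frac{1}{\sqrt{(A_j)_{i_j i_j}}}(A_j)_{i_j,\cdot}^T,\qquad A_{j+1}:=A_j-x_j x_j^*.
\]
The computation already carried out in the paper yields, at each step,
\[
\|x_j\|_1^2\leq n\bigl(\mathrm{tr}(A_j)-\mathrm{tr}(A_{j+1})\bigr),
\]
where the factor $n$ is the ambient dimension and comes from Cauchy--Schwarz applied to $\|a_{i_j}\|_1^2/\|a_{i_j}\|_2^2$.

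The only nontrivial point is to check that this iteration is well defined for $n$ steps and terminates with $A_{n+1}=0$. Interpreting the update as a Schur-complement step, if $A_j$ is symmetric positive semidefinite with $(A_j)_{i_j i_j}>0$, then $A_{j+1}$ is again symmetric positive semidefinite, its $i_j$-th row and column vanish, and its rank equals $\mathrm{rank}(A_j)-1$. Since $A_1=A$ is positive definite of rank $n$, an inductive argument shows that for every $j\leq n$ the matrix $A_j$ is positive semidefinite of rank $n-j+1$; in particular $\mathrm{tr}(A_j)>0$ whenever $A_j\neq 0$, so some diagonal entry of $A_j$ is strictly positive and the next index $i_j$ can indeed be chosen. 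After exactly $n$ steps, $A_{n+1}$ has rank $0$, i.e.\ $A_{n+1}=0$.

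Summing the telescoping bound from $j=1$ to $n$ and using $\mathrm{tr}(A_{n+1})=0$ gives
\[
\sum_{k=1}^n x_k x_k^*=A_1-A_{n+1}=A,
\qquad
\sum_{k=1}^n\|x_k\|_1^2\leq n\bigl(\mathrm{tr}(A_1)-\mathrm{tr}(A_{n+1})\bigr)=n\cdot\mathrm{tr}(A),
\]
as claimed. The main obstacle is really only the Schur-complement bookkeeping of the previous paragraph; everything else is a direct telescoping of an inequality that has already been established.
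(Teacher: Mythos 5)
Your argument is essentially the paper's own: the paper derives the one-step telescoping inequality $\|x\|_1^2 \leq n(\operatorname{tr}(A)-\operatorname{tr}(A_2))$ and then simply says ``Repeating this argument many times,'' while you make the iteration explicit, verify via the Schur-complement interpretation that each step preserves positive semidefiniteness and drops the rank by one, and sum the telescoping bound. This is a correct and faithful elaboration of the intended proof, not a different route.
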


To see that the inequality is sharp, consider the all-1's matrix $\mathbf{1} \mathbf{1}^T$.
However, we also see that the argument is typically lossy insofar as the ratio  $\|a_i\|_1^2/\|a_i\|_2^2$ is only close to $n$ whenever that row of the matrix is close to being constant. Numerical experiments suggest that this decomposition typically leads to much better upper bounds on $\gamma_{+}(A)$ than the eigendecomposition. It would be interesting if this could be made precise.\\

 \textbf{Acknowledgment.} The authors are grateful to Radu Balan for helpful discussions.


\begin{thebibliography}{WW}

\bibitem{BaiY:88}
Z.-D.\ Bai, Y.-Q.\ Yin,
Necessary and sufficient conditions for almost sure convergence of the largest eigenvalue of a Wigner matrix,
Ann.\ Probab.\ 16 (1988) 1729--1741.

\bibitem{BalanJ:24}
R.\ Balan, F.\ Jiang,
Primal and dual optimization problems related to
matrix factorizations,
Special Session on Bases and Frames in Hilbert Spaces,
Spring Sotheastern AMS Sectional Meeting,
March 24, 2024,
\url{math.umd.edu/~rvbalan/PRESENTATIONS/RaduBalan_AMS2024_Sunday.pdf}.


\bibitem{heil} C. Heil and D. R. Larson, Operator theory and modulation spaces, in: Frames and Operator Theory in Analysis and Signal Processing (San Antonio, 2006), Contemp. Math., Vol. 451, Amer. Math. Soc., Providence, RI (2008), 137--150.


\bibitem{tropp} Y. Chen, E. Epperly, J. Tropp, R. Webber, Randomly pivoted Cholesky: Practical approximation of a kernel matrix with few entry evaluations, arXiv:2207.06503


\bibitem{Vershynin:18}
R.\ Vershynin, 
High-dimensional probability:\ An introduction with applications in data science,
Cambridge U.\ Press, 2018.



\end{thebibliography}
\end{document}